\newtheorem{theorem}{Theorem}
\newtheorem{lemma}[theorem]{Lemma}
\newtheorem{corollary}[theorem]{Corollary}
\newtheorem{Prob}{Problem}[section]
\title{\bf On the Nonexistence of Some\\
Generalized Folkman Numbers\footnote{
Supported by the National Natural Science
Foundation (11361008) and the Guangxi Natural Science
Foundation (2011GXNSFA018142).}}
\author{Xiaodong Xu\\[-0.1ex]
\small Guangxi Academy of Sciences\\[-0.6ex]
\small Nanning 530007, P.R. China\\[-0.6ex]
\small {\tt xxdmaths@sina.com}\\[1.3ex]\and
Meilian Liang\\[-0.1ex]
\small School of Mathematics and Information Science\\[-0.6ex]
\small Guangxi University, Nanning 530004, P.R. China\\[-0.6ex]
\small {\tt gxulml@163.com}\\[1.3ex]\and
Stanis{\l}aw Radziszowski\\[-0.1ex]
\small Department of Computer Science\\[-0.6ex]
\small Rochester Institute of Technology, Rochester, NY 14623\\[-0.6ex]
\small {\tt spr@cs.rit.edu}\\[3.3ex]
}
\date{\today}
\begin{document}
\maketitle
\thispagestyle{empty}

\begin{abstract}
For an undirected simple graph $G$, we write
$G \rightarrow (H_1, H_2)^v$ if and only if for
every
red-blue coloring of its vertices there
exists a red $H_1$ or a blue $H_2$.
The generalized vertex Folkman number $F_v(H_1, H_2; H)$
is defined as the smallest integer $n$ for which there
exists an $H$-free graph $G$ of order $n$ such that
$G \rightarrow (H_1, H_2)^v$. The generalized
edge Folkman numbers
$F_e(H_1, H_2; H)$ are defined similarly, when colorings
of the edges are considered.

We show that
$F_e(K_{k+1},K_{k+1};K_{k+2}-e)$ and
$F_v(K_k,K_k;K_{k+1}-e)$ are well defined for $k \geq 3$.
We prove the nonexistence
of $F_e(K_3,K_3;H)$ for some $H$, in particular
for $H=B_3$, where $B_k$ is the book graph of
$k$ triangular pages, and for $H=K_1+P_4$.
We pose three problems on
generalized Folkman numbers, including the
existence question of edge Folkman numbers
$F_e(K_3, K_3; B_4)$,
$F_e(K_3, K_3; K_1+C_4)$ and
$F_e(K_3, K_3; \overline{P_2 \cup P_3} )$.
Our results lead to some general inequalities
involving two-color and multicolor Folkman numbers.
\end{abstract}

\bigskip
\noindent
{\bf Keywords:} Folkman number, Ramsey number\\
{\bf AMS classification subjects:} 05C55, 05C35

\section{Introduction}
\label{sintro}
Let $G$ be a finite undirected graph that contains no loops
or multiple edges. Denote by $V(G)$ the set of its vertices and
$E(G)$ the set of its edges.
For vertex-disjoint graphs
$G$ and $H$, the join graph $G+H$ has the set of vertices
$V(G) \cup V(H)$ and edges
$E(G) \cup E(H) \cup \{ \{(u,v\}\ |\ u \in V(G), v \in V(H)\}$.
For a set of vertices $S \subset V(G)$,
$G[S]$ is the graph induced by $S$ in $G$, and $G-u$ is the
graph obtained from $G$ by removing vertex $u \in V(G)$
together with all the edges adjacent to $u$.

The complete graph of order $n$ is
denoted by $K_n$, and a cycle of length $n$ by $C_n$.
The book graph $B_k$ is defined as $K_1 + K_ {1, k}$,
and the complete graph $K_n$ with one missing edge
will be denoted by $J_n$.
The clique number of $G$ will be denoted by $cl(G)$, and
the chromatic number of $G$ by $\chi(G)$. An $(s,t)$-graph
is a graph that does not contain $K_s$ neither any independent
sets of $t$ vertices. The set $\{1, \cdots, n\}$ will be
denoted by $[n]$.

\smallskip
For graph $G$, we write $G \rightarrow (H_1,H_2)^v$
if and only if for every
red-blue coloring $\chi$ of the vertices $V(G)$ there
exists a red subgraph $H_1$ or a blue subgraph $H_2$
in $\chi$.
The generalized vertex Folkman number $F_v(H_1,H_2; H)$
is defined as the smallest integer $n$ for which there
exists an $H$-free graph $G$ of order $n$ such that
$G \rightarrow (H_1, H_2)^v$. The set of all $H$-free
graphs satisfying the latter vertex arrowing will
be denoted by $\mathcal{F}_v(H_1, H_2; H)$.

The generalized edge Folkman numbers
$F_e(H_1, H_2; H)$ are defined similarly, when colorings
of the edges are considered. We write
$G \rightarrow (H_1, H_2)^e$ if and only if for every
red-blue coloring $\chi$ of the edges $E(G)$ there
exists a red subgraph $H_1$ or a blue subgraph $H_2$
in $\chi$.
The generalized edge Folkman number $F_e(H_1, H_2; H)$
is defined as the smallest integer $n$ for which there
exists an $H$-free graph $G$ of order $n$ such that
$G \rightarrow (H_1, H_2)^e$. The set of all $H$-free
graphs satisfying the latter edge arrowing
will be denoted by $\mathcal{F}_e(H_1, H_2; H)$.

The cases when $H_1$, $H_2$ and $H$ are complete graphs
have been studied by many authors, for two and more colors,
in particular in
\cite{bikovfe334,
DudekRodl2010jctb,ADudek2010,
DudekRodl2011,Folkman,Nkonev2008,NN,Nenov2001,
Nenov2009a,NesetrilRodl,NesetrilRodl1981a,Fv445,XuShao2010a}.
Often, if the graphs $H_i$ and $H$ are complete,
we will simply write the order of the graph,
say, as in $F_e(s,t;k)$ instead of $F_e(K_s,K_t;K_k)$.
In this paper we focus on two colors, but we will also
make some comments related to more colors, such as in
commonly studied multicolor vertex Folkman numbers
$F_v(a_1, a_2, \cdots, a_r;s)$ and edge Folkman numbers
$F_e(a_1, a_2, \cdots a_r;s)$, where $a_i$'s are the orders
of the arrowed complete graphs while coloring $K_s$-free
graphs. We note that the classical Ramsey number
$R(a_1, \cdots, a_r)$ can be defined as the smallest
integer $n$ such that $K_n \rightarrow (a_1, \cdots, a_r)^e$.
In the diagonal case
$a_1 = \cdots = a_r =a$ we may use a more
compact notation
$F_v^r(a;s) = F_v(a_1 ,\cdots ,a_r;s)$ and
$\mathcal{F}_v^r(a;s) = \mathcal{F}_v(a_1 ,\cdots ,a_r; s)$,
similarly
$F_e^r(a;s) = F_e(a_1 ,\cdots ,a_r;s)$ and
$\mathcal{F}_e^r(a;s) = \mathcal{F}_e(a_1 ,\cdots ,a_r; s)$,
as well as for arrowing general graphs, such as
in $F_e^r(G;H)$.

\smallskip
In 1970,
Folkman \cite{Folkman} proved that for any
integer $s > \max \{a_1, \cdots, a_r\}$, both sets
$\mathcal{F}_v(a_1, \cdots, a_r; s)$ and $\mathcal{F}_e(a_1,a_2;s)$
are nonempty, and thus the corresponding Folkman numbers are
well defined. In 1976,
Ne\v{s}et\v{r}il and R\"{o}dl \cite{NesetrilRodl}
generalized this result to the multicolor edge cases,
namely they proved that the sets
$\mathcal{F}_e(a_1, \cdots, a_r; s)$ are also nonempty,
for arbitrary $r \geq 2$ and $s > \max \{a_1, \cdots, a_r\}$.
An interesting upper bound on $F_v^r(a;s)$ was obtained by
Dudek and R\"{o}dl \cite{DudekRodl2010jctb} in 2010, as
in the following theorem.

\medskip
\begin{theorem} \label{pupperbound}
{\rm \cite{DudekRodl2010jctb}}
For any positive integer $r$ there exists a
constant $C = C(r)$ such that for every
$s \geq 2$ it holds that
$F_v^r(s;s+1) \leq Cs^2 \log{^4}s$.
\end{theorem}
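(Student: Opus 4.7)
The plan is to use the probabilistic method on the binomial random graph $G(n,p)$, with $p$ tuned so that the graph is simultaneously close to $K_{s+1}$-free and locally $K_s$-dense. The driving observation is simple: if every vertex subset of size $m = \lceil n/r \rceil$ induces at least one copy of $K_s$, then any $r$-coloring of $V(G)$ must place some color class on at least $m$ vertices, forcing a monochromatic $K_s$. So the target property for the random graph is that every $m$-subset spans a $K_s$, together with the $K_{s+1}$-free constraint; the order of magnitude of the smallest $n$ for which a $G(n,p)$ with these two properties exists will give the desired upper bound.

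I would set $p$ near the $K_{s+1}$-threshold, roughly of the form $p = c(s,r)\cdot n^{-2/(s+1)}$ up to polylogarithmic corrections. Two events then need to be verified with positive probability. First, using a Janson-style lower-tail estimate together with a union bound over all $\binom{n}{m}$ subsets, one shows that every $m$-subset contains a $K_s$, provided $\binom{m}{s} p^{\binom{s}{2}}$ dominates an appropriate function of $n$. Second, the expected number of $K_{s+1}$'s is only $O(n)$, so by Markov's inequality one can delete a sublinear number of vertices, one from each surviving $K_{s+1}$, to obtain a $K_{s+1}$-free subgraph $G'$. Because only a small fraction of vertices is removed, a mild adjustment of $m$ keeps the ``every large subset contains $K_s$'' property inside $G'$.

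Carrying through the two constraints $\binom{n/r}{s}p^{\binom{s}{2}} \gg n$ and $\binom{n}{s+1}p^{\binom{s+1}{2}} = O(n)$ produces a window of admissible $n$; the smallest $n$ in that window, once the logarithmic correction factors necessary for concentration and for the deletion step are absorbed, should be of order $s^2 \log^4 s$. The constant $C$ is then allowed to depend on $r$, which is harmless.

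The principal obstacle is that the two threshold conditions on $p$ meet at essentially the same power of $n$, leaving very little slack. Getting the precise bound $Cs^2\log^4 s$, rather than something noticeably larger like $s^c$ for a crude exponent, requires a sharp lower-tail bound on the $K_s$-count inside every $m$-subset simultaneously, and a careful optimization of the polylogarithmic factors. I expect the most delicate step to be this uniform lower-tail control: a straightforward union bound over $\binom{n}{m}$ subsets is likely to lose too much in the exponent, so one may need a two-layer construction in which a pseudorandom host graph is fixed first and the random sparsification is performed within it, or an iterated refinement in which $K_{s+1}$'s are removed gradually while conditional estimates for the $K_s$-count are preserved.
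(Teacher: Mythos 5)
First, note that the paper does not prove Theorem \ref{pupperbound} at all: it is quoted from Dudek and R\"odl \cite{DudekRodl2010jctb}, so the comparison can only be with their argument. Your opening reduction is correct and standard: a $K_{s+1}$-free graph $G$ on $n$ vertices in which every vertex subset of size $\lceil n/r\rceil$ spans a $K_s$ satisfies $G\rightarrow(s,\dots,s)^v$ by pigeonhole. All of the difficulty is in constructing such a graph with $n=O(s^2\log^4 s)$, and there your plan has a genuine gap, not a missing detail.

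The gap is exactly the step you flag and then defer. Put $n=\Theta(s^2\log^4 s)$ and $m=\Theta(n/r)$. The $K_{s+1}$-deletion constraint forces $p^{\binom{s}{2}}\approx s^{-(s-1)}$, i.e.\ $p=1-\Theta((\log s)/s)$, and then $\mu=\binom{m}{s}p^{\binom{s}{2}}$ is indeed enormous. But the union bound over the $\binom{n}{m}=2^{\Theta(n)}$ subsets needs a per-subset failure probability of $\exp(-\Omega(s^2\log^4 s))$, and a Janson-type lower tail cannot supply it: the overlap term coming from pairs of copies of $K_s$ sharing an edge already gives $\Delta/\mu^2=\Omega(s^4/m^2)=\Omega(1/\log^{8}s)$, so the extended Janson bound is only $\exp(-O(\log^{8}s))$ --- short of what is required by a factor exponential in $s^2$. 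So the ``straightforward union bound'' does not merely lose a little; it fails completely, and the entire content of the theorem sits inside the ``two-layer construction or iterated refinement'' that you mention in one sentence and do not carry out. Two further symptoms of the same problem: (i) your two first-moment constraints alone would put the window at $n=O(s^2)$ with no logarithms, so the $\log^4 s$ cannot come from where you say it does --- it must come from the concentration/structure argument that is absent; (ii) if the number of copies of $K_{s+1}$ is only $O(n)$, deleting one vertex from each is not ``sublinear'' --- you need the count to be at most $\varepsilon n$ for a small constant $\varepsilon$, which is a fixable adjustment of constants but is wrong as written. As it stands the proposal is a correct restatement of what must be constructed, together with an accurate diagnosis of why the naive construction does not work, but it is not a proof.
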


\smallskip
The above determines that both vertex and edge Folkman
numbers exist when the arrowed and avoided graphs are complete,
for $s > \max \{a_1, \cdots, a_r\}$.
By simple monotonicity,
this easily extends to some cases (say, when the arrowed
graphs $H_i$ have at most $a_i$ vertices), but apparently
it poses interesting existence questions in other cases.
Only some special parameters are discussed in the literature,
such as the bound $F_e(K_4-e,K_4-e;K_4) \le 30193$
obtained by Lu \cite{Lu} in 2008. In this paper we focus
on some general situations, in particular when the avoided
graph $H$ is not the complete graph $K_s$, but
$H_1$ and $H_2$ are complete, and often $H_1=H_2=K_3$.

The nonexistence of a Folkman
number with some parameters is equivalent to the emptiness
of the corresponding set of Folkman graphs. For example,
the Folkman number $F_e(K_3,K_3;K_1+P_4)$ does not
exist if and only if $\mathcal{F}_e(K_3,K_3;K_1+P_4)=\emptyset$,
which in fact we prove to be true in Theorem
\ref{Fe33k1+p4nonexist}, Section 4.

The summary of contents of the remainder of
this paper is as follows:
Vertex and edge arrowing by $(K_s-e)$-free graphs
and related existence questions are discussed in
Section 2, similarly for graphs involving book
graphs in Section 3. Other cases involving
wheels and paths are analyzed in Section 4.
Finally, in Section 5 some results for
more than two colors are presented.

\medskip
\section{Ramsey arrowing by $(K_n-e)$-free graphs}\label{Kk+1-e}

Recall from the introduction that $J_k = K_k - e$.
One can easily see that $F_v(2,2;3) = F_v(K_2,K_2;K_3) = 5$,
which can be equivalently stated as that the smallest number
of vertices in any triangle-free graph $G$ with $\chi(G)>2$
is equal to 5. However, it is also easy to observe that
$F_v(K_2,K_2;J_3)$ does not exist, since every $J_3$-free
graph is bipartite. Similarly, we see that $F_e(K_3, K_3; J_4)$
does not exist, since in any $J_4$-free graph no two
triangles can share an edge, and thus
the edges of every triangle
can be independently red-blue colored. These
observations lead to our first theorem.

\begin{theorem} \label{FeFvKk-e}
For $k \geq 3$, if the edge Folkman number
$F_e(K_{k+1},K_{k+1}; J_{k+2})$ exists,
then the vertex Folkman number
$F_v(K_k,K_k;J_{k+1})$ exists too.
\end{theorem}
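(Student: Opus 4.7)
The plan is to show that if $G$ is any graph in $\mathcal{F}_e(K_{k+1},K_{k+1};J_{k+2})$, then some vertex neighborhood of $G$ lies in $\mathcal{F}_v(K_k,K_k;J_{k+1})$, so the latter set is nonempty. The first observation is structural: for every $v\in V(G)$ the induced subgraph $G[N(v)]$ is automatically $J_{k+1}$-free. Indeed, a copy of $J_{k+1}$ on a set $S\subseteq N(v)$ together with $v$ would induce at least $(\binom{k+1}{2}-1)+(k+1)=\binom{k+2}{2}-1$ edges on the $k+2$ vertices $\{v\}\cup S$, forcing a copy of $J_{k+2}$ in $G$ and contradicting the $J_{k+2}$-freeness of $G$.

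It then remains to produce a vertex $v$ with $G[N(v)]\to(K_k,K_k)^v$, which I will do by contradiction. Suppose that for every $v\in V(G)$ there is a vertex $2$-coloring $c_v\colon N(v)\to\{R,B\}$ of $G[N(v)]$ with no monochromatic $K_k$. Fix an arbitrary linear ordering $v_1<v_2<\cdots<v_n$ of $V(G)$ and assemble these local colorings into a global edge coloring $\chi$ of $G$ by the rule $\chi(v_iv_j)=c_{v_j}(v_i)$ whenever $i<j$; equivalently, each edge inherits the color that its larger endpoint locally assigns to its smaller endpoint.

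Since $G\to(K_{k+1},K_{k+1})^e$, the coloring $\chi$ must contain a monochromatic $K_{k+1}$, say red, on vertices $v_{i_1}<\cdots<v_{i_{k+1}}$. For each $j\in[k]$ the edge $v_{i_j}v_{i_{k+1}}$ is red, which by construction means $c_{v_{i_{k+1}}}(v_{i_j})=R$. The vertices $v_{i_1},\ldots,v_{i_k}$ thus lie in $N(v_{i_{k+1}})$, induce a $K_k$ as a subclique of the original $K_{k+1}$, and are all colored red by $c_{v_{i_{k+1}}}$, contradicting the choice of $c_{v_{i_{k+1}}}$. Hence some $G[N(v)]$ arrows $(K_k,K_k)^v$, and by the first paragraph it is $J_{k+1}$-free, witnessing the existence of $F_v(K_k,K_k;J_{k+1})$. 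The only subtle step is this assembly of a global edge coloring from the local vertex colorings: the $c_v$ live on overlapping but distinct vertex sets and the two endpoints of each edge may hold incompatible opinions, but linearly ordering $V(G)$ and always deferring to the ``upper'' endpoint breaks this symmetry in exactly the way needed for a monochromatic $K_{k+1}$ in $G$ to pin down a monochromatic $K_k$ inside a single neighborhood.
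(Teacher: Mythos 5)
Your proof is correct, but it takes a genuinely different route from the paper's. The paper picks $G$ of minimum order in $\mathcal{F}_e(K_{k+1},K_{k+1};J_{k+2})$, so that $G-u\not\rightarrow(K_{k+1},K_{k+1})^e$ for the chosen $u$; it then assumes $F_v(K_k,K_k;J_{k+1})$ does not exist, obtains a partition of $N(u)$ into two $K_k$-free parts, and extends a good edge coloring of $G-u$ to all of $G$ by coloring the edges from $u$ to the two parts red and blue respectively --- a ``local'' extension argument at a single vertex. You instead assume that \emph{every} neighborhood $G[N(v)]$ admits a vertex coloring with no monochromatic $K_k$, and assemble all of these local vertex colorings into a single global edge coloring of $G$ by deferring to the larger endpoint under a fixed linear order; a monochromatic $K_{k+1}$ then localizes to a monochromatic $K_k$ in the neighborhood of its top vertex. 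Your version needs no minimality assumption on $G$ and no appeal to the hypothetical nonexistence of the vertex Folkman number: it directly exhibits a neighborhood of any graph in $\mathcal{F}_e(K_{k+1},K_{k+1};J_{k+2})$ as a witness for $\mathcal{F}_v(K_k,K_k;J_{k+1})$ (and you also spell out, via the edge count, why that neighborhood is $J_{k+1}$-free, which the paper states without proof). The paper's extension technique, on the other hand, is the template reused in its later nonexistence results (Theorems 5 and 7), where one extends a coloring of $G-u$ across the edges at $u$; your ordering trick does not adapt as directly to those arguments, but for this theorem it is arguably cleaner.
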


\begin{proof}
Suppose that $F_e(K_{k+1},K_{k+1};J_{k+2})$ exists,
it is equal to $n$, and let $G$ be any graph of order $n$
in $\mathcal{F}_e(K_{k+1},K_{k+1};J_{k+2})$.
For any vertex $u \in V(G)$ we must have
$G-u \not\rightarrow (K_{k+1},K_{k+1})^e$.
Fix any vertex $u \in V(G)$, and let $H$ be the graph
induced in $G$ by the neighbors of $u$, $H=G[N(u)]$.
Clearly, $H$ is a $J_{k+1}$-free graph.

For contradiction, assume that $F_v(K_k,K_k;J_{k+1})$
does not exist. This implies that
$H \not\rightarrow (k,k)^v$, and hence there exists
a partition of $N(u)$ into $U_1 \cup U_2$ such that
both $G[U_1]$ and $G[U_2]$ are $K_k$-free.
Next, observe that any red-blue edge coloring witnessing
$G-u \not\rightarrow (K_{k+1},K_ {k+1})^e$
can be extended to whole $E(G)$,
without creating any monochromatic $K_{k+1}$,
by coloring the edges $\{\{u,v\} \in E(G) \; |\; v \in U_1\}$
red and coloring the edges
$\{\{u,v\} \in E(G) \;|\; v \in U_2\}$ blue.
This contradicts that 
$G \in \mathcal{F}_e(K_{k+1},K_{k+1};J_{k+2})$,
and completes the proof.
\end{proof}

\medskip
Graph $H$ is called a Ramsey graph
for $K_n$ if $H \rightarrow (K_n, K_n)^e$.
In 1981, Ne\v{s}et\v{r}il and R\"{o}dl \cite{NesetrilRodl1981a}
proved the following theorem.

\smallskip
\begin{theorem} \label{NesetrilRodl1981}
{\rm \cite{NesetrilRodl1981a}}
Let $n \geq 3$ be a fixed positive integer.
Then there exists a Ramsey graph $H$ for $K_n$
such that any two subgraphs $K$, $K'$ of $H$
isomorphic to $K_n$ intersect in at most two points.
\end{theorem}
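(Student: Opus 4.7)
The plan is to reduce the statement to the sparse version of the Nešetřil--Rödl partite-amalgamation theorem. Fix $n\ge 3$ and let $\mathcal{F}$ be the family of all graphs that can be obtained by gluing two copies of $K_n$ along a common set of at least $3$ vertices (so every $F\in\mathcal{F}$ has between $n+1$ and $2n-3$ vertices and strictly contains each of its two $K_n$-halves). Forbidding $\mathcal{F}$ as a subgraph family is equivalent to the desired intersection property, so it suffices to produce a graph $H$ with $H\to(K_n,K_n)^e$ that is $\mathcal{F}$-free.

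First I would invoke the standard multicolor edge Ramsey result of Nešetřil--Rödl (already quoted in the introduction) to fix \emph{some} Ramsey graph $G_0$ for $K_n$. The existence of $G_0$ alone does not give the sparsity we want, since $G_0$ may contain many overlapping $K_n$'s and even copies of $K_{n+1}$. The goal of the remaining argument is to trade $G_0$ for a sparser Ramsey graph $H$ without any member of $\mathcal{F}$ appearing.

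The main step is the partite construction. Build $H$ iteratively as an $N$-partite graph (with $N$ equal to the number of edges of $G_0$), one stage per edge of $G_0$: at stage $i$ one takes the current $i$-th approximation and amalgamates many copies of the previous approximation along the fibres corresponding to the $i$-th edge of $G_0$, in such a way that the "skeleton" (the set of $K_n$-copies that survive as monochromatic threats) faithfully reflects $G_0\to(K_n,K_n)^e$. The key combinatorial property established inductively is that every subgraph of the partial construction on at most $2n-3$ vertices already embeds into a single $K_n$ of the skeleton; in particular, no member of $\mathcal{F}$ is created at any stage, because any such member would have to straddle two distinct skeleton-$K_n$'s at three or more vertices, which the partite identification rules explicitly forbid. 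After stage $N$ one obtains $H$ with $H\to(K_n,K_n)^e$ (by the usual pulling-back argument on the skeleton) and $H$ free of $\mathcal{F}$, which is what we want.

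The hard part, as in all partite constructions, is the amalgamation step: one must verify that each new stage preserves both properties simultaneously. Ramsey-ness is preserved by a pigeonhole/Hales--Jewett-style pullback along the fibres, while $\mathcal{F}$-freeness is preserved by choosing the glueing so that no two copies of $K_n$ from different "pictures" can meet in more than two vertices. Working out this bookkeeping---in particular checking that the forbidden graphs in $\mathcal{F}$ all have at least $n+1$ vertices and thus cannot arise inside a single picture by induction, while cross-picture overlaps are controlled by the partite partition---is the substantive content of the argument. Once this is in hand the theorem follows.
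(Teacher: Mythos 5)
This statement is not proved in the paper at all: it is quoted verbatim from Ne\v{s}et\v{r}il and R\"{o}dl's 1981 article and used as a black box to derive Corollary 4, so there is no in-paper argument to compare yours against. What you have written is a sketch of the method of that cited source itself (the ``partite construction''), so you have certainly identified the right approach, and your opening reduction --- that the intersection condition is equivalent to forbidding the family $\mathcal{F}$ of amalgamations of two copies of $K_n$ over at least $3$ common vertices, each member of which has between $n+1$ and $2n-3$ vertices --- is correct and cleanly stated.

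However, as a proof the proposal has a genuine gap: everything that actually needs to be proved is deferred. The two load-bearing claims --- (i) that the amalgamation at each stage can be performed so that the arrowing property is pulled back (the ``partite lemma,'' which for edges requires a Hales--Jewett-type argument, not mere pigeonhole), and (ii) that the identification rules never create two $K_n$'s meeting in three or more vertices --- are exactly the content of the theorem, and you explicitly label them as ``the substantive content'' without carrying them out. Moreover, the inductive invariant you do state is false as written: a subgraph on up to $2n-3 > n$ vertices cannot ``embed into a single $K_n$ of the skeleton.'' The correct invariant is of a different shape, e.g.\ that every copy of $K_n$ in the picture is a transversal skeleton copy and that distinct skeleton copies meet in at most an edge; proving that this survives each amalgamation is where the work lies. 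There is also a bookkeeping slip: the number of parts should be $|V(G_0)|$ (or $R(n,n)$ if one starts from a complete graph), with one amalgamation stage per \emph{edge}, rather than taking the number of parts equal to the number of edges. In short, the architecture is right and matches the cited source, but the proposal is an outline of where a proof would go rather than a proof.
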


\begin{corollary} \label{Fexistence}
For every integer $k \geq 3$,

\noindent
{\rm (a)}
the edge Folkman number
$F_e(K_{k+1},K_{k+1};J_{k+2})$ exists, and

\noindent
{\rm (b)}
the vertex Folkman number
$F_v(K_k,K_k;J_{k+1})$ exists.
\end{corollary}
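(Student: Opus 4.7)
The plan is to deduce this corollary directly from the two preceding results: Theorem \ref{NesetrilRodl1981} supplies (a), and then Theorem \ref{FeFvKk-e} bootstraps (a) into (b).

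For part (a), I would take the graph $H$ guaranteed by Theorem \ref{NesetrilRodl1981} applied with $n = k+1$, so that $H \rightarrow (K_{k+1}, K_{k+1})^e$ and any two $K_{k+1}$ subgraphs of $H$ meet in at most two vertices. The key observation is that $J_{k+2} = K_{k+2}-e$ contains two copies of $K_{k+1}$ sharing exactly $k$ vertices: if $u,v$ are the endpoints of the missing edge in $J_{k+2}$, then $\{u\} \cup W$ and $\{v\} \cup W$ are both cliques of size $k+1$, where $W$ is the set of the remaining $k$ vertices, and these two cliques intersect in $W$, which has $k$ vertices. Since $k \geq 3 > 2$, the presence of a $J_{k+2}$ subgraph in $H$ would violate the intersection property of Theorem \ref{NesetrilRodl1981}. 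Hence $H$ is $J_{k+2}$-free, which gives $H \in \mathcal{F}_e(K_{k+1}, K_{k+1}; J_{k+2})$ and therefore $F_e(K_{k+1},K_{k+1}; J_{k+2})$ exists.

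For part (b), once (a) is established, Theorem \ref{FeFvKk-e} immediately yields the existence of $F_v(K_k, K_k; J_{k+1})$.

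There is no real obstacle here beyond correctly identifying the two overlapping $K_{k+1}$'s inside $J_{k+2}$ and checking that their intersection size $k$ exceeds the threshold $2$ allowed by Ne\v{s}et\v{r}il and R\"odl's theorem; the hypothesis $k \geq 3$ is exactly what makes this argument go through, which matches the range in which Theorem \ref{FeFvKk-e} itself applies.
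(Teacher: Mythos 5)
Your proposal is correct and follows exactly the paper's route: part (a) comes from observing that the Ne\v{s}et\v{r}il--R\"{o}dl graph of Theorem \ref{NesetrilRodl1981} with $n=k+1$ must be $J_{k+2}$-free (since $J_{k+2}$ contains two copies of $K_{k+1}$ meeting in $k>2$ vertices), and part (b) then follows from Theorem \ref{FeFvKk-e}. You have merely spelled out the $J_{k+2}$-freeness check in more detail than the paper, which simply asserts it.
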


\begin{proof}
Graph $H$ in Theorem \ref{NesetrilRodl1981} does
not contain $J_{n+1}$ for $n \ge 4$, thus
if $n=k+1$ then
the set $\mathcal{F}_e(K_{k+1},K_{k+1};J_{k+2})$
is nonempty, and hence part (a) of the corollary
follows.  Theorem \ref{FeFvKk-e} and part (a)
imply part (b).
\end{proof}

\bigskip
We can easily see that for integers $s$ and $t$,
if $k > s \geq t \geq 2$, then $F_v(K_s,K_t;J_{k+1})$
exists, and by monotonicity
$F_v(K_s,K_t;J_{k+1}) \leq F_v(s,t;k)$.
The upper bound for $F_e(K_{k+1},K_{k+1};J_{k+2})$
which can be obtained using the proof of
Theorem \ref{NesetrilRodl1981}
is large, and likely it is much larger than the exact value.
Similarly, the implied upper bound for $F_v(K_k,K_k;J_{k+1})$
is likely much larger than the exact value.
It would be interesting to obtain better upper bounds for
these numbers directly without using Theorem \ref{NesetrilRodl1981},
for example by a method
similar to one used in the proof of
Theorem 1 in \cite{DudekRodl2010jctb}.
We note that a straightforward reasoning similar
to a method used in \cite{Nkonev2008} leads to an inequality
$F_v(K_{s_1s_2},K_{t_1t_2};$ $J_{k_1k_2+1}) \leq
F_v(s_1,t_1;k_1+1)F_v(K_{s_2},K_{t_2};J_{k_2+1}),$ for
$2 \leq s_1 \leq t_1 \leq k_1$ and
$3 \leq s_2 \leq t_2 \leq k_2$.
This makes us anticipate that $F_v(K_k,K_k;J_{k+1})$ grows
slowly with $k$, and possibly can be bounded by
$c F_v(k,k;k+1)$ for some constant $c>0$.

The best known concrete lower and upper bounds on
various Ramsey numbers of the form $R(J_s,K_t)$
are collected in \cite{SRN};
for example, we know that $30 \leq R(J_5,K_5) \leq 33$.
In that case, any 29-vertex witness graph to Ramsey lower
bound seems to be a good candidate for the vertex Folkman
number case of arrowing $(3,4)^v$. This would give
an interesting bound $F_v(K_3,K_4;J_5)\le 29$
(unfortunately, we were not successful in finding any
such graph so far).
Still we think that, in general, further exploration
of witnesses to lower bounds for
Ramsey numbers as graphs
showing upper bounds for (vertex or edge)
Folkman numbers is worth an effort.

\section{Arrowing triangles by $B_k$-free graphs}
\label{Bk}

Recall that the book graph $B_k$ was defined as
$B_k=K_1+K_{1,k}$, hence it has $k+2$ vertices
and consists of $k$ triangles sharing one common
edge. In particular, $B_1=K_3$, $B_2=J_4$ and
$B_3=K_5 \setminus K_3$. Thus, the first book-specific
case (different from $K_k$ and $J_k$) is that for the
book graph $B_3$ considered in the next theorem.

\begin{theorem} \label{k4b3freearrows33v}
There exists a $B_3$-free and $K_4$-free graph
$G$ of order $19$ such that $G \rightarrow (3,3)^v$.
Thus we have $F_v(K_3,K_3;B_3)\le 19$.
\end{theorem}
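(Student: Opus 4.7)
The plan is to produce an explicit 19-vertex graph $G$ and then verify, in turn, that $G$ is $K_4$-free, $B_3$-free, and $G \rightarrow (3,3)^v$. Because the three constraints must hold simultaneously on a graph of small order, I would look for a highly symmetric candidate so that the verifications collapse. My first attempt would be a circulant $G = \mathrm{Cay}(\mathbb{Z}_{19}, S)$ with symmetric connection set $S = -S \subset \mathbb{Z}_{19} \setminus \{0\}$: vertex-transitivity reduces the local forbidden-subgraph checks to conditions at a single vertex, and the cyclic action (together with the color swap) cuts down the number of 2-colorings one must examine for the arrowing.

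Once a candidate $S$ is fixed, $K_4$-freeness reduces to the condition that no three elements $a, b, c \in S$ satisfy $b-a,\, c-a,\, c-b \in S$, while $B_3$-freeness becomes $|S \cap (s+S)| \leq 2$ for every $s \in S$, i.e.\ every edge lies in at most two triangles. Both are routine to inspect. I would iterate over symmetric subsets $S$ of moderate size (large enough to force arrowing, small enough to avoid $K_4$ and $B_3$), using the action of $\mathrm{Aut}(\mathbb{Z}_{19}) \cong \mathbb{Z}_{18}$ on connection sets to keep the search manageable.

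The main obstacle is the vertex-arrowing step $G \rightarrow (3,3)^v$. Here one must show that every partition $V(G) = V_1 \cup V_2$ leaves at least one of $G[V_1]$, $G[V_2]$ with a triangle. A naive enumeration is $2^{19}$ cases, but this reduces drastically after quotienting by the $\mathbb{Z}_{19}$-action, identifying color swaps, and restricting to partitions with $|V_1|, |V_2| \leq \alpha(G)$ (since a triangle-free color class need not be independent but is severely size-constrained by the triangle-density of $G$). Even after these reductions I expect computer assistance to be essential; the alternative would be to exhibit by hand a small sub-hypergraph of the triangle hypergraph of $G$ whose 2-colorings all admit a monochromatic edge, which requires nontrivial structural insight into the particular $G$.

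If no circulant on $\mathbb{Z}_{19}$ meets all three conditions, I would fall back on a less symmetric construction: start from a small triangle-free graph of high chromatic number (e.g.\ a Mycielski-type graph) and apply local surgery --- vertex splittings or edge additions within a join-style framework --- to reach $19$ vertices while retaining $B_3$-freeness and gaining enough triangles to force the arrowing. In either case the final presentation would consist of an explicit adjacency list (or connection set), quick pencil-and-paper verification of the two forbidden-subgraph conditions, and a computer-verified check of the vertex-arrowing property.
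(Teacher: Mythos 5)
There is a genuine gap: your proposal never actually produces a graph. The statement is an existence claim, and everything you write is a search strategy --- ``I would iterate over symmetric subsets $S$,'' ``I expect computer assistance to be essential,'' ``if no circulant on $\mathbb{Z}_{19}$ meets all three conditions, I would fall back\dots'' --- with no witness exhibited and no verification carried out. In particular, you do not establish that any circulant on $\mathbb{Z}_{19}$ (or anything else) satisfies the three conditions simultaneously, so the argument cannot be completed as written. Your framing of the arrowing check as an essentially computational task ($2^{19}$ colorings modulo symmetry) is also a misdiagnosis of where the difficulty lies: for a well-chosen construction the arrowing admits a three-line hand proof.

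The paper's construction is of the join/gadget type you relegate to a fallback. Take $V(G)=V_0\cup V_1\cup V_2\cup V_3\cup\{u\}$ where $G[V_0]=K_3$ on $\{v_1,v_2,v_3\}$, each $G[V_i]\cong C_5$ for $i\in\{1,2,3\}$, $u$ is joined to all of $V_1\cup V_2\cup V_3$, and $v_i$ is joined to all of $V_i$. This has $3+15+1=19$ vertices; $K_4$-freeness and $B_3$-freeness are immediate local checks. For the arrowing: by color symmetry assume $u$ is red; if all of $V_0$ is red it is a red triangle, so some $v_1$ is blue; to avoid a red triangle through $u$ and an edge of $G[V_1]\cong C_5$, the red vertices of $V_1$ must be independent in $C_5$, hence at most $2$, so at least $3$ vertices of $V_1$ are blue and (since $\alpha(C_5)=2$) two of them are adjacent, forming a blue triangle with $v_1$. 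The key idea you are missing is precisely this use of $C_5$'s independence number inside a cone over each $v_i$, which makes the whole verification a pencil-and-paper argument rather than an exhaustive coloring check.
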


\noindent
{\bf Note.}
For the upper bound in the second part of the theorem
it is not required that the graph $G$ is $K_4$-free.
In any case, we consider the bound in Theorem
\ref{k4b3freearrows33v} quite strong.
Finding the actual value of $F_v(K_3,K_3;B_3)$
can be difficult, and it is open
whether the best construction must contain $K_4$.

\begin{proof}
We will construct the required graph $G$
on the vertex set
$V(G)= \bigcup_ {i=0} ^3 V_i \cup \{u\}$,
where $G[V_0]=K_3$, and
the subgraphs induced by $V_i$ are
isomorphic to $C_5$, for $i \in \{1,2,3\}$.
Let $V_0 = \{v_1, v_2, v_3\}$.
The other edges of $G$ are all possible
edges between $u$ and $\bigcup_ {i=1} ^3 V_i$,
and all possible edges between $v_i$
and the vertices in $V_i$, for each $i \in \{1,2,3\}$.
Thus $G$ has 19 vertices and 48 edges.

It is easy to see that the graph $G$ is both
$K_4$-free and $B_3$-free. With little more
effort, one can show that every red-blue
coloring of $V(G)$ contains a monochromatic
triangle. Without loss of generality we can assume
that $u$ is red and at least one of the vertices
in $V_0$, say $v_1$, is blue. However, in order
to avoid a red triangle on $u$ and two vertices
in $V_1$, $G[V_1]$ must contain a blue $K_2$. But
the latter together with $v_1$ would form a blue
triangle. Hence we have that
$G \rightarrow (3,3)^v$. Finally, the same
graph $G$ is a witness of the upper bound.
\end{proof}

\bigskip
Since $B_2=J_4$, and using the observation
from the beginning of Section 2, we see that
$F_e(K_3,K_3;B_2)$ does not exist.
Now we will consider the existence of
$F_e(K_3,K_3;B_k)$ for $k \geq 3$,
starting with the case of $B_3$.

\medskip
\begin{theorem} \label{FeK3K3K5-K3}
The edge Folkman number
$F_e(K_3,K_3;B_3)$ does not exist.
\end{theorem}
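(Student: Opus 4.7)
The plan is to show that every $B_3$-free graph $G$ admits a red-blue edge-coloring with no monochromatic triangle, yielding $\mathcal{F}_e(K_3,K_3;B_3)=\emptyset$. I first reformulate the arrowing question as a hypergraph 2-coloring (Property~B) problem. Let $\mathcal{H}$ be the 3-uniform hypergraph with vertex set $E(G)$ and one hyperedge $E(T)$ for each triangle $T$ of $G$. Since $B_3$ is precisely the graph formed by three triangles sharing a common edge, $G$ being $B_3$-free is equivalent to $\Delta(\mathcal{H})\le 2$, and a 2-edge-coloring of $G$ avoids monochromatic triangles iff the associated 2-coloring of $V(\mathcal{H})$ leaves no hyperedge monochromatic. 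The theorem thus reduces to the claim that \emph{every 3-uniform hypergraph with $\Delta\le 2$ has Property~B.}

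I would prove this claim by induction on $|V(\mathcal{H})|+|E(\mathcal{H})|$. Three reductions handle most cases. First, an isolated vertex may simply be removed and colored arbitrarily. Second, if some vertex $v$ has degree $1$ in a unique hyperedge $e=\{v,u,w\}$, delete $e$, 2-color the rest inductively, and re-insert $v$ with a color that makes $e$ bichromatic (flipping $v$ if $u,w$ agree under the induced coloring). Third, if two hyperedges $e_1=\{a,b,c\}$ and $e_2=\{a,b,d\}$ share two vertices, then $\Delta\le 2$ forces $a,b$ to be exhausted by $e_1,e_2$; delete both hyperedges, apply induction, and pick the colors of $a,b$ by a short case check on $(\chi(c),\chi(d))$ to make both $e_1,e_2$ bichromatic.

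The remaining case is where $\mathcal{H}$ is linear (pairwise intersections of hyperedges have size at most $1$) and every vertex has degree exactly $2$. Here I would pass to the auxiliary graph $G^{\ast}$ whose vertex set is $E(\mathcal{H})$ and whose edges join any two hyperedges sharing a vertex of $\mathcal{H}$. Linearity makes $G^{\ast}$ simple, and the uniform degree-2 condition on $\mathcal{H}$ makes $G^{\ast}$ cubic. A 2-coloring of $V(\mathcal{H})$ is exactly a 2-edge-coloring of $G^{\ast}$, and the Property~B condition translates into: no vertex of $G^{\ast}$ has all three incident edges in the same color class. By Vizing's theorem, $G^{\ast}$ admits a proper edge coloring with at most $\Delta(G^{\ast})+1=4$ colors; grouping these four color classes arbitrarily into two pairs gives a 2-edge-coloring in which, at every vertex, the three incident edges carry three of the four colors, and by pigeonhole neither pair can capture all three. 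Translating back yields the desired 2-coloring of $V(\mathcal{H})$.

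The main obstacle is this last case; the first three reductions are routine, but they are essential preprocessing because the Vizing-plus-pigeonhole finish needs $G^{\ast}$ to be a genuine \emph{simple} cubic graph, which is exactly what the linearity and uniform degree-2 hypotheses guarantee.
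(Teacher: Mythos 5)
Your proof is correct, and it takes a genuinely different route from the paper's. The paper argues locally at a vertex of a putative minimal Folkman graph $G$: for $u\in V(G)$, $B_3$-freeness forces $G[N(u)]$ to have maximum degree at most $2$, hence to split into paths and cycles, and a triangle-free $2$-coloring of $E(G-u)$ is extended across the star at $u$ using the bipartition of each bipartite component and an ad hoc split of each odd cycle. You instead prove the stronger global statement that every $3$-uniform hypergraph of maximum degree at most $2$ has Property B, applied to the hypergraph of triangles on $E(G)$; your translation ($B_3$-free $\Leftrightarrow$ every edge lies in at most two triangles $\Leftrightarrow$ $\Delta(\mathcal{H})\le 2$) is right, the three reductions are sound (in the two-vertex-intersection case you can simply color $a$ red and $b$ blue, no case check needed), and in the irreducible case linearity and $2$-regularity do make $G^{\ast}$ a simple cubic graph, so Vizing plus pigeonhole legitimately finishes the argument. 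What your approach buys is a clean, reusable lemma on $2$-colorability of bounded-degree $3$-uniform hypergraphs and a statement about all $B_3$-free graphs rather than a minimal counterexample. What the paper's approach buys is brevity and portability to Theorem~\ref{Fe33k1+p4nonexist}: a $(K_1+P_4)$-free graph can have an edge in arbitrarily many triangles (e.g.\ $B_k$ itself is $(K_1+P_4)$-free), so your degree-$2$ hypergraph lemma does not apply there, whereas the neighborhood decomposition does.
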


\begin{proof}
Suppose that $F_e(K_3,K_3;B_3)$ exists,
it is equal to $n$, and let $G$ be any graph of order $n$
in $\mathcal{F}_e(K_3,K_3;B_3)$.
For any vertex $u \in V(G)$ we must have
$G-u \not\rightarrow (K_3,K_3)^e$.
Fix any vertex $u \in V(G)$, and let $H$ be the graph
induced in $G$ by the neighbors of $u$, $H=G[N(u)]$.
Since $G$ is $B_3$-free, $H$ does not contain $K_{1,3}$,
or equivalently has maximum degree at most 2.
Therefore any connected component of $H$ is
bipartite or it is an odd cycle.

We will show that any red-blue coloring $\chi$
of the edges of $G-u$, such that $\chi$ is without
monochromatic triangles, can be extended to $G$
without creating any monochromatic triangles.
This will contradict the definition of $G$ and thus it
will complete the proof.

For the edges
$\{u,v\}$, where $v$ is in a bipartite component of $H$,
we assign the color red or blue according to which part
of the bipartition $v$ belongs to. For vertices $v$ on
odd cycles in $H$, we proceed as follows. Let $U$ be the
vertex set of some odd cycle in $H$. We can partition
$U$ into $U_1 \cup U_2$ so that $H[U_1]$ has exactly
one edge, say $e$, and $U_2$ is an independent set in $H$.
If $\chi(e)$ is red (blue), then we color the edges in
$\{\{u,v\} \;|\; v \in U_1 \}$ blue (red), and the
edges in $\{\{u,v\} \;|\; v \in U_2\}$ red (blue).
\end{proof}

\bigskip
We were not able to answer the question whether
$F_e(K_3, K_3; B_4)$ exists, and hence we leave it
as an open problem for the readers. Note that for
every $k \geq 5$, the edge Folkman number
$F_e(K_3,K_3;B_k)$ exists, and it is equal to 6,
because the complete graph $K_6$ is $B_k$-free
and $K_6 \rightarrow (K_3, K_3)^e$.

\medskip
\begin{Prob} \label{FeK3K3B4}
Does the edge Folkman number $F_e(K_3,K_3;B_4)$ exist?
\end{Prob}

In Theorem \ref{k4b3freearrows33v} we constructed a
$K_4$-free and $B_3$-free graph $G$ vertex arrowing $(3,3)^v$.
We think that it is an interesting challenge to solve
the following graph existence problem for $K_4$-free
and book-free graphs edge arrowing $(3,3)^e$.

\begin{Prob} \label{FeK3K3Bk}
For which $k \ge 4$ there exists a
$K_4$-free and $B_k$-free graph $G$ such that
$G \rightarrow (3,3)^e$?
\end{Prob}

The answer seems not easy even just for $k=4$.
Note that a YES solution to Problem \ref{FeK3K3B4}
does not provide an answer to Problem \ref{FeK3K3Bk}
with $k=4$, while a NO answer to Problem \ref{FeK3K3B4}
implies a NO answer to Problem \ref{FeK3K3Bk} for $k=4$.
For Problem \ref{FeK3K3Bk},
we know that the answer is NO for $k=3$ by Theorem
\ref{FeK3K3K5-K3} (hence we ask only about cases
for $k \ge 4$), and clearly a YES answer for any $k$
would imply YES answers for all $t>k$.

\medskip
One of the most wanted Folkman numbers is
$F_e(3,3;4)=F_e(K_3,K_3;K_4)$, for which the currently
best known bounds are $20 \le F_e(3,3;4)$ \cite{bikovfe334}
and $F_e(3,3;4)$ $\le 786$ \cite{Lange2012}. The value of
$F_e(3,3;4)$ can be equivalently defined as the smallest
number of vertices in any $K_4$-free graph which is not
a union of two triangle-free graphs. An overview of what
is known about this problem was presented in \cite{sur14}.
In particular, it was conjectured by Exoo that a special
cubic residues $(4,12)$-graph $G_{127}$ on the
vertex set $\mathcal{Z}_{127}$ is a witness to a much
improved upper bound $F_e(3,3;4) \le 127$, and likely
its subgraphs may even give $F_e(3,3;4) \le 94$
(see \cite{sur14}).
The graph $G_{127}$ is $K_4$-free, has independence
number 11, is $B_{12}$-free, but it contains a large
number of subgraphs isomorphic to $B_{11}$.
The Exoo's conjecture can be stated as
$G_{127} \rightarrow (3,3)^e$.
If true, then it would give a YES
answer in Problem \ref{FeK3K3Bk} for all $k \ge 12$,
leaving open the cases for $4 \le k \le 11$.
Recall that by Theorem \ref{FeK3K3K5-K3} the
answer for $k=3$ is NO.

\medskip
\section{More on arrowing triangles}

In this section we study the existence of
$F_e(K_3,K_3;H)$ for connected graphs $H$.
First,
we observe that, since graph avoidance is
monotonic with respect to subgraphs, if
a graph $H$ is connected and $cl(H)\ge 4$,
then there exist $H$-free graphs edge arrowing
$(3,3)^e$, i.e. $F_e(K_3,K_3;H)$ exists, and
obviously $F_e(K_3,K_3;H) \leq F_e(3,3;4)$.
For 5 vertices, there are 4 such graphs, namely
$\widehat{K}_{4,i}$ for $i \in [4]$,
where $\widehat{K}_{n,s}$ is the graph obtained
by connecting a new vertex $v$ to $s$ vertices of a $K_n$.
Clearly,
the numbers $F_e(K_3,K_3;\widehat{K}_{4,i})$ exist for
$i \in [4]$, and
$F_e(K_3,K_3;\widehat{K}_{4,i+1}) \le F_e(K_3,K_3;\widehat{K}_{4,i})$
for $1 \le i \le 3$. In particular, note that
$\widehat{K}_{4,3}=J_5$,
$\widehat{K}_{4,4}=K_5$, and
we have the easy bounds $15=F_e(3,3;5) \le F_e(K_3,K_3;J_5)
\le F_e(3,3;4) \le 786$, using only what
is known about $F_e(3,3;k)$ \cite{sur14}.
For $\widehat{K}_{4,i}$-free graphs, $i=1,2$, we have
the following lemma.

\medskip
\begin{lemma}
$F_e(K_3,K_3;\widehat{K}_{4,2})=
F_e(K_3,K_3;\widehat{K}_{4,1})=F_e(3,3;4).$
\end{lemma}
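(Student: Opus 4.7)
The plan is to sandwich the three numbers. Since $K_4 \subsetneq \widehat{K}_{4,1} \subsetneq \widehat{K}_{4,2}$ as subgraphs, monotonicity of avoidance yields
\[
F_e(K_3,K_3;\widehat{K}_{4,2}) \le F_e(K_3,K_3;\widehat{K}_{4,1}) \le F_e(3,3;4)
\]
for free, since every $K_4$-free graph is automatically $\widehat{K}_{4,i}$-free for $i\in\{1,2\}$. It therefore suffices to establish the matching lower bound $F_e(3,3;4) \le F_e(K_3,K_3;\widehat{K}_{4,2})$.

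Given any $\widehat{K}_{4,2}$-free graph $G$ with $G\rightarrow (K_3,K_3)^e$, I would produce a $K_4$-free graph $G'$ on the same vertex set also satisfying $G'\rightarrow (K_3,K_3)^e$, by deleting from $G$ every edge that lies in some $K_4$ of $G$. Two preliminary structural facts are needed: first, any vertex outside a $K_4$ of $G$ has at most one neighbor in it (this is just $\widehat{K}_{4,2}$-freeness restated); second, any two distinct $K_4$'s of $G$ share at most one vertex, since a shared edge together with the two remaining vertices of one of the $K_4$'s would produce a $\widehat{K}_{4,2}$. Hence distinct $K_4$'s have pairwise disjoint edge sets, the deletion is unambiguous, and $G'$ is manifestly $K_4$-free with $|V(G')|=|V(G)|$.

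To verify $G'\rightarrow (K_3,K_3)^e$, I would argue contrapositively: given a mono-triangle-free red-blue coloring $\chi'$ of $E(G')$, extend it to $E(G)$ by painting the six edges of each deleted $K_4$ with a fixed mono-triangle-free pattern on $K_4$ (say, a red $4$-cycle plus the two blue diagonals); this is coherent by the edge-disjointness above. I would then classify each triangle $T=\{x,y,z\}$ of $G$ by how its edges distribute: if all three lie inside the same $K_4$ $K$, then $T\subset K$ and is handled by the chosen pattern; if all three lie in $G'$, then $T$ is handled by $\chi'$; any other distribution is impossible, because if an edge $\{x,y\}$ of $T$ lies in some $K_4$ $K$ with $z\notin K$, then $z$ has two neighbors in $K$, contradicting $\widehat{K}_{4,2}$-freeness, while $z\in K$ forces the remaining edges of $T$ into $K$ as well. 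The main obstacle is this triangle case analysis---the two preliminary structural facts are precisely what make the mixed cases collapse---but once they are in place the extended coloring is mono-triangle-free, contradicting $G\rightarrow (K_3,K_3)^e$, so $G'\rightarrow (K_3,K_3)^e$ and $F_e(3,3;4)\le |V(G')|=|V(G)|$ as desired.
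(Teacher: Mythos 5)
Your proposal is correct and follows essentially the same route as the paper: delete all edges lying in some $K_4$, observe that $\widehat{K}_{4,2}$-freeness forces every triangle losing an edge to sit entirely inside a single $K_4$, and extend a good coloring of $G'$ by coloring each $K_4$ independently. The extra structural observations you supply (pairwise edge-disjointness of the $K_4$'s and the explicit triangle-free pattern on $K_4$) are correct refinements of details the paper leaves implicit.
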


\begin{proof}
By the monotonicity of $F_e(K_3,K_3;\widehat{K}_{4,i})$
mentioned above, it is sufficient to prove that
$F_e(K_3,K_3;\widehat{K}_{4,2}) \ge F_e(3,3;4)$.
We will show that for any graph
$G \in \mathcal{F}_e(K_3,K_3;\widehat{K}_{4,2})$
there exists a subgraph $G' \in \mathcal{F}_e(3,3;4)$ of $G$,
which will complete the proof. Define graph $G'$ on the same
set of vertices as $G$, with the set of edges
$E(G')=E(G) \setminus \{e\;|\; e \in K_4 \subset G\}$.
Obviously, $G'$ is $K_4$-free.
Since $G$ is $\widehat{K}_{4,2}$-free,
we can see that every triangle
in $G$ which is not a triangle in $G'$ has its three
vertices in the same $K_4$ of $G$.
Thus, any red-blue edge coloring of $E(G')$ without
monochromatic triangles can be extended to whole $E(G)$
by independently red-blue coloring the edges of each $K_4$.
This contradicts that
$G \in \mathcal{F}_e(K_3,K_3;\widehat{K}_{4,2})$.
Thus, no such coloring of $E(G')$ exists, and hence
$G' \in \mathcal{F}_e(3,3;4)$.
\end{proof}

\bigskip
In the remainder of this section, we will consider only
connected graphs $H$ with $K_3$ but without $K_4$.
There are three such graphs on 4 vertices,
namely $J_4$ and its subgraphs, and hence as
commented in Section 2, $F_e(K_3,K_3;H)$ does
not exist in these cases.
In the following, we focus attention on connected
graphs $H$ of order 5 with $cl(H)=3$, and leave
the study of such graphs with more than 5 vertices
for future work. The next theorem claims
the nonexistence of $F_e(K_3,K_3;H)$ for
a special 5-vertex graph $H=K_1+P_4$.

\medskip
\begin{theorem} \label{Fe33k1+p4nonexist}
The edge Folkman number
$F_e(K_3,K_3;K_1+P_4)$ does not exist.
\end{theorem}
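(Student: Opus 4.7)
The plan is to imitate the proof of Theorem \ref{FeK3K3K5-K3}, substituting a new structural description of the neighborhood graph tailored to the forbidden subgraph $K_1+P_4$. I would suppose for contradiction that $F_e(K_3,K_3;K_1+P_4)=n$, let $G\in\mathcal{F}_e(K_3,K_3;K_1+P_4)$ be any graph of order $n$, and fix an arbitrary vertex $u\in V(G)$. Since $n$ is minimum, $G-u$ admits some red-blue edge coloring $\chi$ containing no monochromatic triangle. The goal is to extend $\chi$ to the edges incident to $u$ without introducing a monochromatic triangle, contradicting $G\to(K_3,K_3)^e$ and proving the claimed nonexistence.

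Setting $H=G[N(u)]$, the hypothesis that $G$ is $(K_1+P_4)$-free is equivalent to $H$ containing no $P_4$ as a subgraph, since any such $P_4$ together with $u$ would realize $K_1+P_4$ in $G$. The first substantive step is the purely graph-theoretic observation that a graph has no $P_4$ as a subgraph if and only if each of its connected components is either a triangle $K_3$ or a star $K_{1,m}$ for some $m\ge 0$. The nontrivial direction is seen by taking a connected component $C$ of order at least four and a longest path in $C$: this path has length at most two, and any additional vertex of $C$ must attach only to its middle vertex (else a longer path, that is a $P_4$, appears), forcing $C$ to be a star. Each component of $H$ is therefore $K_1$, $K_2$, $K_3$, or $K_{1,m}$ with $m\ge 2$.

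With this description in hand, the extension of $\chi$ is done component by component, with no cross-component interactions because distinct components of $H$ span no edges of $G$, so no triangle through $u$ uses vertices from two components. For each star component I follow the bipartite-component tactic of Theorem \ref{FeK3K3K5-K3}: color the edge from $u$ to the center one color and the edges from $u$ to the leaves with the other, so that every triangle on $u$ together with a center and a leaf carries two different colors at $u$. For each $K_3$ component on vertices $\{a,b,c\}$ the coloring $\chi$ makes two edges share a color and meet at a common vertex; I apply the odd-cycle trick from Theorem \ref{FeK3K3K5-K3} by taking the two endpoints of such a same-color edge as $U_1$, the remaining vertex as $U_2$, and coloring the $\{u,\cdot\}$-edges from $U_1$ with the color opposite to that of the chosen edge while giving the $\{u,\cdot\}$-edge from $U_2$ the matching color. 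A short case check on the color of the third edge of the $K_3$ confirms that no triangle through $u$ becomes monochromatic. The main obstacle is locating and proving the structural characterization of $P_4$-subgraph-free graphs; once that is isolated, the component-wise extension is essentially a transcription of the $B_3$ argument.
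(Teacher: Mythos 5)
Your proof is correct and follows essentially the same route as the paper's: minimality gives a good coloring of $G-u$, the $(K_1+P_4)$-freeness of $G$ forces $H=G[N(u)]$ to be $P_4$-free, and the coloring is extended component by component exactly as in the proof of Theorem \ref{FeK3K3K5-K3}. Your characterization of the components as stars or $K_3$'s is a correct refinement of the paper's ``bipartite or isomorphic to $K_3$,'' and your explicit treatment of the $K_3$ components is just the paper's odd-cycle trick specialized to $C_3$.
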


\begin{proof}
The proof is very similar to that of Theorem \ref{FeK3K3K5-K3}.
Suppose contrary, that $F_e(K_3,K_3;K_1+P_4)$ exists,
it is equal to $n$, and let $G$ be any graph of order $n$
in $\mathcal{F}_e(K_3,K_3;K_1+P_4)$.
For any vertex $u \in V(G)$ we must have
$G-u \not\rightarrow (K_3,K_3)^e$.
Fix any vertex $u \in V(G)$, and let $H$ be the graph
induced in $G$ by the neighbors of $u$, $H=G[N(u)]$.
Since $G$ is $(K_1+P_4)$-free, $H$ does not contain $P_4$.
Therefore any connected component of $H$ is
bipartite or isomorphic to $K_3$.
Now, the same steps as in the proof of
Theorem \ref{FeK3K3K5-K3} lead to a contradiction.
\end{proof}

\bigskip
We now state a theorem summarizing the existence of
$F_e(K_3,K_3;H)$ for all connected graphs $H$ on
5 vertices with $cl(H)=3$. Only two cases remain open,
namely those for the wheel graph $W_5$ and the
complement of $P_2 \cup P_3$. These cases should be
studied more, and we expect that new insights
can be important for better understanding
of which graphs edge arrow $(3,3)^e$.

\medskip
\begin{theorem} \label{Fen5}
Let $H$ be any connected $K_4$-free graph on
$5$-vertices containing $K_3$. Then the edge
Folkman number $F_e(K_3,K_3;H)$ does not exist,
except for two possible cases for $H$, namely $W_5$
and $\overline{P_2 \cup P_3}$.
\end{theorem}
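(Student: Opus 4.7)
The plan is to reduce Theorem~\ref{Fen5} to the two previously established nonexistence results of this paper, namely Theorem~\ref{FeK3K3K5-K3} for $H=B_3$ and Theorem~\ref{Fe33k1+p4nonexist} for $H=K_1+P_4$, using a finite enumeration together with the following monotonicity: if $H_1$ is a subgraph of $H_2$, then every $H_1$-free graph is automatically $H_2$-free, so $\mathcal{F}_e(K_3,K_3;H_1)\subseteq\mathcal{F}_e(K_3,K_3;H_2)$; in particular, emptiness of $\mathcal{F}_e(K_3,K_3;H_2)$ forces emptiness of $\mathcal{F}_e(K_3,K_3;H_1)$.

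The first step is a finite enumeration. A routine case analysis by edge count (between $5$ and $8$) and by whether $H$ has a vertex of degree $4$ produces exactly eleven connected $K_4$-free graphs on $5$ vertices that contain $K_3$. Six of them have a universal vertex and so take the form $H=K_1+F$ for some $4$-vertex $K_3$-free graph $F$ with at least one edge, corresponding to $F\in\{K_2\cup 2K_1,\,2K_2,\,P_3\cup K_1,\,P_4,\,K_{1,3},\,C_4\}$ and giving $H\in\{K_1+(K_2\cup 2K_1),\,K_1+2K_2,\,K_1+(P_3\cup K_1),\,K_1+P_4,\,B_3,\,W_5\}$. The remaining five have maximum degree $3$: the bull and the lollipop ($5$ edges), the house and $J_4$ with a pendant attached at a degree-$2$ vertex ($6$ edges), and $\overline{P_2\cup P_3}$ ($7$ edges, the unique graph on the list with degree sequence $(3,3,3,3,2)$).

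The second step is the key observation that every graph on this list, with the sole exceptions of $B_3$, $W_5$, and $\overline{P_2\cup P_3}$, is a spanning subgraph of $K_1+P_4$. Labeling the latter as an apex $v$ joined to a path $1$-$2$-$3$-$4$, I would exhibit each of the remaining eight graphs as a subset of the edge set $\{v1,v2,v3,v4,12,23,34\}$: for instance, deleting $23$ gives $K_1+2K_2$, deleting $v2$ gives the house, deleting $v1$ gives $J_4$ with a degree-$2$ pendant, and suitable pairs of further deletions yield $K_1+(P_3\cup K_1)$, the bull, the lollipop, and the cricket. The three exceptional graphs do not embed into $K_1+P_4$: $W_5$ has too many edges, while $B_3$ and $\overline{P_2\cup P_3}$ have the same number of edges as $K_1+P_4$ but different degree sequences, ruling out an isomorphic copy.

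To conclude, the case $H=B_3$ is handled directly by Theorem~\ref{FeK3K3K5-K3}, and for each of the eight subgraphs $H$ of $K_1+P_4$ the monotonicity principle together with Theorem~\ref{Fe33k1+p4nonexist} gives $\mathcal{F}_e(K_3,K_3;H)\subseteq\mathcal{F}_e(K_3,K_3;K_1+P_4)=\emptyset$. Only $H=W_5$ and $H=\overline{P_2\cup P_3}$ escape this argument, matching the two exceptions in the statement. I expect the main obstacle to be the bookkeeping: carrying out the enumeration exhaustively so that no small graph is overlooked, and then checking each of the eight subgraph embeddings into $K_1+P_4$. Both are finite but need to be done with some care, especially to verify that the three exceptional cases really are not subgraphs of $K_1+P_4$.
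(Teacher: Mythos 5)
Your proposal is correct and follows essentially the same route as the paper: enumerate the $11$ connected $K_4$-free $5$-vertex graphs containing $K_3$, dispose of $B_3$ via Theorem~\ref{FeK3K3K5-K3}, dispose of $K_1+P_4$ and its seven further relevant subgraphs via Theorem~\ref{Fe33k1+p4nonexist} and subgraph monotonicity, and leave $W_5$ and $\overline{P_2\cup P_3}$ as the exceptions. The only difference is that you spell out the enumeration and the eight embeddings into $K_1+P_4$ explicitly, which the paper leaves as a routine check.
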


\begin{proof}
There are 11 nonisomorphic $K_4$-free connected
graphs on 5 vertices containing $K_3$.
By Theorem \ref{Fe33k1+p4nonexist},
$F_e(K_3,K_3;K_1+P_4)$ does not exist. 
The graph $K_1+P_4$ contains as a subgraph 7 further
such graphs $H$
(including the bowtie graph $K_1+2K_2$, $K_{1,4}+ e$,
and the so-called bull graph),
for which by monotonicity $F_e(K_3,K_3;H)$ does not exist either.
This leaves three cases: $B_3$, $W_5$ and $\overline{P_2 \cup P_3}$.
The first case was eliminated by Theorem \ref{FeK3K3K5-K3},
while the other two are as the stated exceptions.
\end{proof}

\medskip
\begin{Prob} \label{Fek3k3W4}
Prove or disprove the existence

\smallskip
\noindent
{\rm (a)}
of the edge Folkman number $F_e(K_3,K_3;\overline{P_2 \cup P_3} )$, and

\noindent
{\rm (b)}
of the edge Folkman number $F_e(K_3,K_3;K_1+C_4)$.
\end{Prob}

\medskip
Note that $W_5=K_1+C_4$ is a subgraph of $J_5=K_5-e$.
Hence, if $F_e(K_3,K_3;W_5)$ exists, then we
have $F_e(K_3,K_3;J_5) \leq F_e(K_3,K_3;W_5)$.
The analogous statement holds for the complement
of $P_2 \cup P_3$. On the other hand, the latter
is a subgraph of $W_5$, hence there are only three
possible combined YES/NO answers to the existence
questions (a) and (b) in
Problem \ref{Fek3k3W4}, namely NO/NO, YES/YES and
NO/YES.

\smallskip
A natural direction to generalize considerations
of this section is to analyze which small graphs
on at least $6$ vertices necessarily are subgraphs
of every $K_4$-free graph edge arrowing $(3,3)^e$.
The simplest candidate for such a graph is
$B_4=K_6 \setminus K_4$, as stated in
Problem \ref{FeK3K3B4}. One could also proceed
by making a catalog of small subgraphs
in known witnesses of existence of $F_e(3,3;4)$,
in particular for the graph $G_{786}$,
which currently is the smallest known
such graph \cite{Lange2012}.
This, and even only some conditional answers
to our problems, may lead to better
bounds on $F_e(3,3;4)$.

\bigskip
\section{Some cases of multicolor Ramsey arrowing} \label{multicolor}

Since we know that $F_e(K_3,K_3;J_4)$ does not exist,
if a 3-color edge arrowing $G \rightarrow (K_3,K_3,K_k)^e$
holds, then we must have $G \rightarrow (J_4,K_k)^e$.
This easily generalizes to
$F_e(K_3,K_3,K_k;K_s) \geq F_e(J_4,K_k;K_s)$
for $s > k \geq 3$, and in particular it gives
$F_e(3,3,3;4) \geq F_e(J_4,K_3;K_4)$. We note that
$F_e(3,3,3;4)$ exists, its value
is unknown, it is likely quite large, and probably
still much harder to obtain than the notoriously
difficult case of $F_e(3,3;4)$.
Clearly, the same reasoning holds for any
graph $H$ instead of $J_4$ for
which $F_e(K_3,K_3;H)$ does
not exist, including $B_3$, $K_1+P_4$ or
other graphs discussed in the previous section.
This leads to the following corollary.

\medskip
\begin{corollary} \label{FeHKk}
If $H$ is any graph for which $F_e(K_3,K_3;H)$
does not exist, then for $s > k \geq 3$ we have
$$F_e(3,3,k;s) \geq F_e(H,K_k;K_s).$$
\end{corollary}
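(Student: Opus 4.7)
The plan is to refine the argument sketched in the paragraph immediately preceding the corollary, replacing $J_4$ with the general graph $H$. I will show that any $K_s$-free witness graph for the 3-color arrowing $(K_3,K_3,K_k)^e$ is also a $K_s$-free witness for the 2-color arrowing $(H,K_k)^e$, which gives the desired inequality on Folkman numbers (including the case where the left-hand side is undefined, interpreted as $\infty$).

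First, I would fix $n=F_e(3,3,k;s)$ and a $K_s$-free graph $G$ on $n$ vertices with $G\rightarrow(K_3,K_3,K_k)^e$, and aim to prove $G\rightarrow(H,K_k)^e$. Consider an arbitrary red-blue coloring $\chi$ of $E(G)$, where red is to produce $H$ and blue is to produce $K_k$. Let $G_r$ be the spanning subgraph of $G$ formed by the red edges. If $G_r$ contains $H$ as a subgraph, we are done; so assume $G_r$ is $H$-free.

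The key step uses the hypothesis. Since $F_e(K_3,K_3;H)$ does not exist, no $H$-free graph arrows $(K_3,K_3)^e$; in particular $G_r\not\rightarrow(K_3,K_3)^e$. Thus there is a 2-coloring $\chi'$ of $E(G_r)$, with colors say $c_1$ and $c_2$, containing no monochromatic triangle. I would now assemble a 3-coloring $\chi''$ of $E(G)$: edges of $G_r$ keep the $\chi'$-colors $c_1,c_2$, while all blue edges of $\chi$ receive a third color $c_3$. Applying $G\rightarrow(K_3,K_3,K_k)^e$ to $\chi''$, we must produce a $K_3$ in color $c_1$, a $K_3$ in color $c_2$, or a $K_k$ in color $c_3$. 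The first two possibilities are excluded by construction of $\chi'$. Hence there is a $c_3$-monochromatic $K_k$, whose edges are precisely blue in the original $\chi$, giving a blue $K_k$ in $\chi$, as required.

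Having established $G\rightarrow(H,K_k)^e$ for the $K_s$-free graph $G$ of order $n$, the set $\mathcal{F}_e(H,K_k;K_s)$ is nonempty, so $F_e(H,K_k;K_s)$ exists and satisfies $F_e(H,K_k;K_s)\le n=F_e(3,3,k;s)$. (If $F_e(3,3,k;s)$ does not exist, the claimed inequality holds vacuously.) There is no real obstacle here: the proof is a direct generalization of the $H=J_4$ case already sketched before the corollary, and the only thing to be careful about is the bookkeeping of the three color identifications when combining $\chi$ and $\chi'$ into $\chi''$.
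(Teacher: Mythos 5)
Your proposal is correct and follows essentially the same route as the paper: the paper's proof simply invokes the observation preceding the corollary (that any witness of $F_e(3,3,k;s)\le n$ must also edge arrow $(H,K_k)^e$), and your argument is exactly that observation carried out in detail, via the $H$-free red subgraph failing to arrow $(K_3,K_3)^e$ and the resulting $3$-coloring forcing a blue $K_k$.
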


\begin{proof}
As in the comments above, we observe
that any $n$-vertex graph $G$ witnessing
the upper bound $F_e(3,3,k;s) \le n$
must also satisfy $G \rightarrow (H,K_k)^e$.
Thus we have $F_e(H,K_k;K_s) \le n$.
\end{proof}

\bigskip
It would be interesting to construct a $K_4$-free graph
$G$ such that $G \rightarrow (K_3,J_4)^e$
but $G \not\rightarrow (3,3,3)^e$.
This might be quite hard since it is difficult
to construct any $K_4$-free graph that arrows $(K_3,J_4)^e$,
and it would be another challenge to show that it does not
arrow $(3,3,3)^e$. Similarly, obtaining any nontrivial
lower bound for the difference
$F_e(3,3,3;4) - F_e(K_3,J_4;K_4)$
seems difficult.

On the other hand, there exists an
interesting example of a $K_4$-free graph $G$
on 30193 vertices, constructed by Lu \cite{Lu}, such that
$G \rightarrow (J_4,J_4)^e$ (thus also $G \rightarrow (K_3,J_4)^e$).
It is possible that for this graph we have
$G \rightarrow (3,3,3)^e$, however we do not
know how to prove or disprove the latter. Also, note
that by an argument as in the proof of Corollary \ref{FeHKk}
we have $F_e^4(3;4) = F_e(3,3,3,3;4) \ge F_e(J_4,J_4;K_4)$.

\bigskip
Finally, we establish a new link between some
two-color edge Folkman numbers and multicolor
vertex Folkman numbers. They generalize a result
obtained in \cite{XuShao2010a}.

\smallskip
\begin{lemma}
For $k \ge s \ge 2$ and graphs $G$ and $H$, if $G$ is $H$-free,
$H \subset K_{k+1}$, and $G \rightarrow (K_s,K_k)^e$, then
for every vertex $u \in V(G)$ and $s-1$ colors we have
$G-u \rightarrow (K_k,\cdots,K_k)^v$.
\end{lemma}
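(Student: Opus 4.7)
I would argue by contradiction: suppose there is a vertex $u\in V(G)$ together with an $(s-1)$-coloring $\chi:V(G-u)\to[s-1]$ whose color classes $V_1,\dots,V_{s-1}$ contain no copy of $K_k$. From $\chi$ I will build a red-blue edge coloring of $G$ with no red $K_s$ and no blue $K_k$, contradicting $G\to(K_s,K_k)^e$.

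The construction extends $\chi$ to a coloring $\chi'$ of $V(G)$ by assigning $u$ some color $c^*\in[s-1]$, and colors each edge $\{v,w\}\in E(G)$ blue when $\chi'(v)=\chi'(w)$ and red otherwise. Since only $s-1$ colors are used, a red $K_s$ would require $s$ vertices of pairwise distinct colors, which is impossible. A blue $K_k$ must sit in a single $\chi'$-color class; if it lies entirely in $V(G-u)$ it is a monochromatic $K_k$ of $\chi$, contradicting our assumption on $\chi$. The only remaining possibility is a blue $K_k$ containing $u$, which amounts to $u$ together with a $K_{k-1}$ inside $V_{c^*}\cap N(u)$.

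It therefore suffices to choose $c^*\in[s-1]$ so that $V_{c^*}\cap N(u)$ contains no $K_{k-1}$: with such a choice the coloring has neither a red $K_s$ nor a blue $K_k$, producing the desired contradiction with $G\to(K_s,K_k)^e$. The remaining task is to justify that such a color $c^*$ always exists, and this is where the hypothesis that $G$ is $H$-free with $H\subset K_{k+1}$ is essential. Since $H\subset K_{k+1}$ and $G$ is $H$-free, $G$ is in particular $K_{k+1}$-free, which forces $G[N(u)]$ to be $K_k$-free. I expect the main obstacle to be ruling out the configuration in which every color class $V_i\cap N(u)$ contains a copy $T_i$ of $K_{k-1}$: in that case each $\{u\}\cup T_i$ would be a $K_k$ in $G$, and one would have to extract from the combined structure of these cliques through $u$ (together with any forced edges between distinct $T_i$'s) a copy of $H$ inside $K_{k+1}$, contradicting the $H$-freeness of $G$.
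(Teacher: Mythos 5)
There is a genuine gap, and you have correctly located it yourself: your argument hinges on finding a color $c^*$ such that $V_{c^*}\cap N(u)$ contains no $K_{k-1}$, and no such $c^*$ need exist. The hypothesis only yields that $G[N(u)]$ is $K_k$-free (via $K_{k+1}$-freeness of $G$), which in no way prevents \emph{every} class $V_i\cap N(u)$ from containing a $K_{k-1}$. Already for $s=k=3$ and $H=K_4$: $N(u)$ is triangle-free, but both $V_1\cap N(u)$ and $V_2\cap N(u)$ may contain edges, so every choice of $c^*$ produces a blue triangle through $u$. Your proposed rescue --- extracting a copy of $H\subset K_{k+1}$ from the union of the cliques $T_i\cup\{u\}$ --- does not work either: each $T_i\cup\{u\}$ is only a $K_k$, and the union of several $K_k$'s sharing the single vertex $u$ (a ``bowtie-like'' configuration) need not contain $K_{k+1}$, nor an arbitrary prescribed $H\subset K_{k+1}$; note also that the lemma as stated must hold for $H=K_{k+1}$ itself, where your fallback gives nothing.

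The paper avoids this trap by using a different edge coloring rather than the ``blue iff same vertex color'' rule. It colors \emph{all} edges incident to $u$ red and \emph{all} edges inside $G[N(u)]$ blue, keeps edges inside each $V_i$ blue, and colors the remaining cross-part edges red. Then no nontrivial blue clique contains $u$ at all (killing exactly the case that breaks your construction), a red clique through $u$ has at most two vertices (edges of $N(u)$ are blue) and a red clique avoiding $u$ meets each part at most once, so there is no red $K_s$; and a blue $K_k$ must either lie in a single $K_k$-free part or use a blue cross-part edge, which forces all its vertices into $N(u)$ and hence a $K_{k+1}$ on $S\cup\{u\}$, contradicting $H$-freeness. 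Your first two reductions (red $K_s$ impossible, blue $K_k$ inside one part impossible) are fine; the missing idea is this asymmetric treatment of $u$ and its neighborhood.
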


\begin{proof}
For a contradiction, suppose that for some graphs $G$ and
$H$ as specified in the lemma, and for some vertex $u \in V(G)$,
there exists a partition $V(G-u) =\bigcup_{i = 1} ^{s-1}V_i$,
such that the graphs $G[V_i]$ are $K_k$-free, for
every $i \in [s-1]$.

Now, we color red or blue all the edges in $E(G)$
as follows. All edges in each $G[V_i]$, for $i \in [s-1]$,
are colored blue. The edges in $G[N(u)]$ are also blue.
The edges between $u$ and $N(u)$ are red, and
all other edges in $E(G)$, which are necessarily between
different parts $V_i$, are also colored red. Note that any
red clique may have at most one vertex in each
of the parts $V_i$, and that there are no red triangles
passing through vertex $u$. Thus, this coloring has no
red $K_s$. No nontrivial blue clique contains vertex $u$,
and none of $G[V_i]$ contains blue $K_k$, hence any potential
blue $K_k$ on vertices $S$ must intersect different parts $V_i$.
However, if such $S$ exists, and
because of how the coloring was defined,
the set of vertices $S \cup \{u\}$ would form a $K_{k+1}$,
contrary to the assumption that $G$ is $H$-free.
\end{proof}

\medskip
\begin{corollary} \label{Fe3kFvkkgeneral}
For $2 \leq s \leq k$ and graph $H \subset K_{k+1}$,
if $F_e(K_s,K_k;H)$ exists, then
$F_v^{s-1}(K_k;H)$ also exists and
$F_e(K_s,K_k;H) \ge F_v^{s-1}(K_k;H)+1$.
\end{corollary}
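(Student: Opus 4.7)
The plan is to deduce the corollary directly from the preceding lemma by taking any extremal witness for the edge Folkman number and deleting a single vertex. Specifically, let $n = F_e(K_s,K_k;H)$ and choose any $H$-free graph $G$ of order $n$ with $G \rightarrow (K_s,K_k)^e$, i.e., $G \in \mathcal{F}_e(K_s,K_k;H)$. Fix an arbitrary vertex $u \in V(G)$ and set $G' = G - u$.

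First I would observe that $G'$ is again $H$-free, since $H$-freeness is inherited by induced subgraphs. Next, the hypotheses of the lemma apply verbatim (we are given $2 \le s \le k$ and $H \subset K_{k+1}$, and $G$ is $H$-free with $G \rightarrow (K_s,K_k)^e$), so the lemma yields $G' \rightarrow (K_k,\ldots,K_k)^v$ in $s-1$ colors. Combined with $H$-freeness of $G'$, this gives $G' \in \mathcal{F}_v^{s-1}(K_k;H)$, so in particular this family is nonempty and the vertex Folkman number $F_v^{s-1}(K_k;H)$ is well defined.

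Finally, since $G'$ has exactly $n-1$ vertices, we get $F_v^{s-1}(K_k;H) \le n - 1 = F_e(K_s,K_k;H) - 1$, which rearranges to the stated inequality. There is no real obstacle here: the content of the corollary is essentially a bookkeeping step converting an edge-arrowing witness into a vertex-arrowing witness by vertex deletion, with all the substantive work already done in the preceding lemma.
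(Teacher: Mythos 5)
Your proof is correct and follows exactly the paper's argument: take a minimum-order witness in $\mathcal{F}_e(K_s,K_k;H)$, delete a vertex, and apply the preceding lemma to place the resulting graph in $\mathcal{F}_v^{s-1}(K_k;H)$. Your explicit remark that $G-u$ remains $H$-free is a small but welcome addition that the paper leaves implicit.
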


\begin{proof}
Consider any graph $G$, such that
$G \in \mathcal{F}_e(K_s,K_k;H)$, of the least
possible order $F_e(K_s,K_k;H)$.
Then by Lemma 11, the graph $G-u$ is in the set
$\mathcal{F}_v^{s-1}(K_k;H)$ and it has one
vertex less than $G$.
This proves the inequality.
\end{proof}

\bigskip
The proofs of our last lemma and corollary use
a method similar to one applied in the proof of
$F_e(3,k;k+1) > F_v(k,k;k+1)$ in \cite{XuShao2010a}.
The latter is a special case of Corollary \ref{Fe3kFvkkgeneral}
with $s=3$ and $H=K_{k+1}$. Another interesting
instantiation of Corollary \ref{Fe3kFvkkgeneral}
is for $H=J_{k+1}$, for which
the existence question of corresponding
Folkman numbers was discussed in Section 2.

\bigskip

\end{document}